\newcounter{author}
\renewcommand*\author[1]{%
  \stepcounter{author}%
  \ifnum\c@author=1
  \gdef\@author{#1}%
  \else
  \xdef\@author{\unexpanded\expandafter{\@author\and#1}}%
  \fi
  \csgdef{author@\the\c@author}{#1}}
\newcommand*\email[1]{%
  \csgdef{email@\the\c@author}{#1}}
\newcommand*\address[1]{%
  \csgdef{address@\the\c@author}{#1}}
  \xdef\author@count{\the\c@author}%
\newcommand*\print@authors{%
  \ifnum\c@author>\author@count
  \else
  \print@author{\the\c@author}%
  \advance\c@author by 1
  \expandafter\print@authors
  \fi}
\newcommand*\print@author[1]{%
  \par\medskip
  \begin{tabular}{@{}l@{}}%
    \textsc{\csuse{author@#1}}\\
    \csuse{address@#1}\\
    \textit{E-mail address}:
    \href{mailto:\csuse{email@#1}}{\csuse{email@#1}}
  \end{tabular}}
\newtheorem{theorem}{Theorem}
\newtheorem*{remark*}{Remark}
\newcommand{\mexp}[1]{\mathbb{E}[#1]}
\title{Modulus of continuity for a martingale sequence}
\date{}
\author{Azat Miftakhov}
\address{Moscow State University, Faculty of Mechanics and Mathematics,\\ 1 Leninskiye Gory 119991, Moscow, Russia}
\email{miftakhov-af@tutanota.com}
\begin{document}
\maketitle
\begin{abstract}
  Given a martingale sequence of random fields that satisfies a natural assumption of boundedness, it is shown that the pointwise limit of this sequence can be modified in such a way that a certain class of moduli of continuity is preserved. That is, if every element of the sequence admits a given modulus of continuity, one can construct a modification of the limiting random field so that this new field also admits the same modulus of continuity. Additionally, it is shown that requiring further smoothness and a stronger notion of boundedness for the original sequence guarantees further smoothness of the limiting field and a stronger mode of convergence to this limit. Moreover, the modulus of continuity is also  preserved for the derivatives.
\end{abstract}

\noindent \textbf{1.} Let~$\theta: \mathbb{R}_+ \to \mathbb{R}_+$ be a \textit{modulus of continuity} which is continuous, increasing, and subadditive. In other words, $\theta$ is a continuous increasing function,~$\theta(0)=0$, that satisfies~$\theta(x+y) \le \theta(x)+\theta(y)$ for all~$x,y \in \mathbb{R}_+$. Further we are interested exclusively in continuous, increasing, and subadditive moduli, and for the sake of brevity they are referred to simply as moduli. A canonical example of a modulus of continuity is given by~$\theta(x) = x^\alpha$, $\alpha\in(0,1]$, which describe the property of H\"{o}lder continuity.

For any function~$f$ on a compact domain~$E \subset \mathbb{R}^d$, we say that it \textit{admits} the modulus of continuity~$\theta$ if and only if
\begin{equation}
  \label{intro_eq1}
  \sup\limits_{x \ne y} \frac{|f(x)-f(y)|}{\theta(|x-y|)} < +\infty.
\end{equation}

We are going to study~\eqref{intro_eq1} for random fields that are elements of a martingale sequence. To simplify the technical matters we bound ourselves to considering only discreet-time martingales. The proof can be easily modified to cover the continuous case as well, however one needs to impose additional technical conditions.

Consider a filtered probability space~$(\Omega, \mathcal{F}, \{\mathcal{F}_n\}_{n \ge 0}, \mathcal{P})$. For the sake of convenience we assume that both the probability space and the filtration are complete. In this setting the following statement holds.
\begin{theorem}
  \label{th1}
  Let~$\{(\xi_n(x), x \in E)\}_{n \ge 0}$ be a sequence of random fields such that their realizations admit a modulus of continuity~$\theta$ almost surely. Set
  \begin{equation}
    \label{mn_def}
    M_n \overset{\mathrm{def}}{=} \sup\limits_{x}|\xi_n(x)| + \sup\limits_{x \ne y} \frac{|\xi_n(x)-\xi_n(y)|}{\theta(|x-y|)},
  \end{equation}
  and assume that~$(\xi_n(x),\{\mathcal{F}_n\}_{n\ge 0})$ is a martingale for every fixed~$x \in E$. If
  \begin{equation}
    \label{th_cond}
    \sup\limits_n \mexp{M_n} < +\infty,
  \end{equation}
  then there exist a random field~$(\xi(x), x \in E)$ such that its realizations admit the modulus of continuity~$\theta$ almost surely and such that
  \begin{equation}
    \label{th_conv}
    (\xi_n(x), x \in E) \underset{n \to \infty}{\longrightarrow} (\xi(x), x \in E)
  \end{equation}
   pointwise almost surely.
 \end{theorem}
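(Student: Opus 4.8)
The plan is to first convert the integrated hypothesis~\eqref{th_cond} into a \emph{pathwise} bound, by showing that $M_n$ is itself a nonnegative submartingale. For fixed $x,y\in E$ the difference $\xi_n(x)-\xi_n(y)$ is a martingale for $\{\mathcal{F}_n\}$, so conditional Jensen applied to $t\mapsto|t|$ makes $|\xi_n(x)|$ and $|\xi_n(x)-\xi_n(y)|/\theta(|x-y|)$ submartingales. Since the $\xi_n$ are almost surely continuous, the suprema defining $M_n$ in~\eqref{mn_def} may be taken over a fixed countable dense set $D\subset E$ (and over $D\times D$ off the diagonal) without changing their almost sure values; in particular $M_n$ is $\mathcal{F}_n$-measurable, and integrable by~\eqref{th_cond}. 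For a countable family of submartingales $Y^{(i)}_n$ with integrable supremum one has $\mexp{\sup_i Y^{(i)}_{n+1}\mid\mathcal{F}_n}\ge\sup_i\mexp{Y^{(i)}_{n+1}\mid\mathcal{F}_n}\ge\sup_i Y^{(i)}_n$, so a supremum of submartingales is a submartingale; as a sum of two such suprema, $M_n$ is a submartingale. Doob's maximal inequality then gives $\mathcal{P}(\max_{k\le n}M_k\ge\lambda)\le\lambda^{-1}\mexp{M_n}\le\lambda^{-1}\sup_k\mexp{M_k}$, and letting $n\to\infty$ and then $\lambda\to\infty$ shows that $M^{\ast}:=\sup_n M_n$ is almost surely finite.

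Next I would build the limit on $D$ and extend it. For each $x\in D$ the process $(\xi_n(x),\mathcal{F}_n)$ is a martingale with $\mexp{|\xi_n(x)|}\le\mexp{M_n}\le\sup_k\mexp{M_k}<\infty$, hence $L^1$-bounded, so it converges almost surely by Doob's martingale convergence theorem. Intersecting these countably many full-measure events with $\{M^{\ast}<\infty\}$ produces an event $\Omega_0$ of probability one on which $M^{\ast}<\infty$ and $\xi_n(x)$ converges for every $x\in D$; call the limit $\xi(x)$. Passing to the limit in $|\xi_n(x)-\xi_n(y)|\le M^{\ast}\theta(|x-y|)$ on $\Omega_0$ shows that $\xi|_D$ admits $\theta$; since $\theta$ is continuous with $\theta(0)=0$, $\xi|_D$ is uniformly continuous and extends uniquely to a continuous field $(\xi(x),x\in E)$ on the compact set $E$, which still admits $\theta$ by density of $D$ and continuity of $\theta$. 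Setting $\xi\equiv0$ off $\Omega_0$, the field is continuous in $x$ and measurable in $\omega$, hence jointly measurable (completeness of the space absorbs the null set).

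Finally I would upgrade the convergence from $D$ to all of $E$. Fix $x\in E$ and $\omega\in\Omega_0$; given $\varepsilon>0$ pick $x'\in D$ with $2M^{\ast}\theta(|x-x'|)\le\varepsilon$, which is possible because $M^{\ast}<\infty$ and $\theta(r)\to0$ as $r\to0$. Then
\begin{align*}
  |\xi_n(x)-\xi(x)| &\le |\xi_n(x)-\xi_n(x')| + |\xi_n(x')-\xi(x')| + |\xi(x')-\xi(x)| \\
  &\le 2M^{\ast}\theta(|x-x'|) + |\xi_n(x')-\xi(x')| \le \varepsilon + |\xi_n(x')-\xi(x')|,
\end{align*}
and the last term vanishes as $n\to\infty$ on $\Omega_0$, so $\limsup_n|\xi_n(x)-\xi(x)|\le\varepsilon$; since $\varepsilon$ was arbitrary, $\xi_n(x)\to\xi(x)$ on $\Omega_0$. (The same estimate together with compactness of $E$ in fact yields uniform convergence on $E$, which is stronger than claimed.)

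I expect the first step to be the main obstacle: the only non-routine point is extracting pathwise control on the modulus and the sup-norm from a hypothesis stated only in expectation, which the submartingale property of $M_n$ delivers. After that the argument is a standard combination of $L^1$ martingale convergence on a countable dense set and an Arzel\`a--Ascoli/equicontinuity passage to a continuous limit; the technical care needed concerns only measurability of the suprema in~\eqref{mn_def} (reduction to $D$) and the integrability that justifies pulling $\sup$ through the conditional expectation (domination by $M_{n+1}$).
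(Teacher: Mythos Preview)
Your proof is correct and follows essentially the same strategy as the paper: obtain pathwise control from the submartingale property of $M_n$, apply Doob's convergence on a countable dense set, extend the limit to all of $E$ while preserving the modulus, and upgrade to full pointwise convergence via the triangle inequality. The only cosmetic differences are that you bound by $M^{\ast}=\sup_n M_n$ (via Doob's maximal inequality) where the paper uses the almost sure limit $M$ of $M_n$, and you extend by uniform continuity where the paper writes out the explicit McShane--Whitney formula $\xi(x)=\inf_{y\in A}\bigl(\tilde\xi(y)+M\,\theta(|x-y|)\bigr)$.
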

 \begin{proof}
   Clearly, $(M_n, \{\mathcal{F}_n\}_{n\ge 0})$ is a submartingale. The condition~\eqref{th_cond} means that this submartingale is bounded. By the classical Doob's martingale convergence theorem (e.g., see~\cite{Chung}) one has that
   \begin{equation}
     M_n \underset{n \to \infty}{\longrightarrow} M
   \end{equation}
   almost surely, where~$M$ is a random variable with expectation~$\mexp{M} < +\infty$. Condition~\eqref{th_cond} also implies that the martingale~$(\xi_n(x),\{\mathcal{F}_n\}_{n\ge 0})$ is bounded for every fixed~$x \in E$. Again, Doob's martingale convergence theorem yields
   \begin{equation}
     \xi_n(x) \underset{n \to \infty}{\longrightarrow} \tilde{\xi}(x)
   \end{equation}
   almost surely, for some random variable~$\tilde{\xi}(x)$ with expectation~$\mexp{\tilde{\xi}(x)}<+\infty$. In this way one can define the random field~$(\tilde{\xi}(x), x \in E)$. Note, however, that neither can we claim that realizations of this field admit~$\theta$ almost surely, nor can we claim~$(\xi_n(x), x \in E) \underset{n \to \infty}{\longrightarrow} (\tilde{\xi}(x), x \in E)$ pointwise almost surely. The almost sure convergence merely takes place for every fixed~$x \in E$, and the exceptional set of full-measure, in fact, depends upon~$x \in E$. We are going to construct a modification of~$(\xi(x), x\in E)$, such that it admits~$\theta$, and prove the corresponding convergence.

   Let~$A$ be a dense countable subset of~$E$. Since~$A$ is countable,
   \begin{equation}
     (\xi_n(x), x \in A) \underset{n \to \infty}{\longrightarrow} (\tilde{\xi}(x), x\in A)
   \end{equation}
   pointwise almost surely. Consequently, using~\eqref{mn_def} and passing to the limit as~$n\to \infty$ in the inequality
   \begin{equation}
     |\xi_n(x) - \xi_n(y)| \le M_n \theta(|x-y|), \quad x,y \in A,
   \end{equation}
   we obtain
   \begin{equation}
     \label{eq1}
     |\tilde{\xi}(x) - \tilde{\xi}(y)| \le M \theta(|x-y|)
   \end{equation}
   for all~$x,y \in A$ almost surely. In other words, realizations of~$(\tilde{\xi}(x) , x \in A)$ admit the modulus~$\theta$ almost surely.
   
   Define~$(\xi(x), x \in E)$ by
   \begin{equation}
     \xi(x) \overset{\mathrm{def}}{=} \inf\limits_{y \in A} \left(\tilde{\xi}(y) + M\theta(|x-y|)\right).
   \end{equation}

   We need to show that realizations of~$(\xi(x), x \in E)$ also admit~$\theta$ almost surely. First, suppose~$x \in E$ and~$y \in A$. The chain of inequalities
   \begin{equation}
     \begin{aligned}
       - M \theta(|x-y|) &\le M \inf\limits_{u \in A} \left(\theta(|x-u|) - \theta(|u-y|)\right) \\
       &\le \inf\limits_{u \in A} \left(\tilde{\xi}(u) - \tilde{\xi}(y) + M \theta(|x-u|)\right) \le M \theta(|x-y|),
     \end{aligned}
   \end{equation}
   where the first one follows by subadditivity and monotonicity of~$\theta$, gives us
   \begin{equation}
     \label{eq2}
     |\xi(x) - \tilde{\xi}(y)| = \left|\inf\limits_{u \in A} \left(\tilde{\xi}(u) - \tilde{\xi}(y) + M\theta(|x-u|)\right)\right| \le M \theta(|x-y|),
   \end{equation}
for all~$x \in E$ and~$y \in A$ almost surely. In particular, we see that~$\xi(x) = \tilde{\xi}(x)$ for all~$x \in A$ almost surely.

   Next, for any~$x,y \in E$ there exist sequences~$\{x_k\} \subset A$ and $\{y_k\} \subset A$ such that~$x_k \to x$ and~$y_k \to y$ as~$k \to \infty$. The triangle inequality and the formulas~\eqref{eq1} and~\eqref{eq2} yield
   \begin{equation}
     \begin{aligned}
       |\xi(x) - \xi(y)| &\le |\xi(x) - \tilde{\xi}(x_k)| + |\tilde{\xi}(x_k) - \tilde{\xi}(y_k)|+ |\tilde{\xi}(y_k) - \xi(y)| \\
       &\le M\big(\theta(|x-x_k|) + \theta(|x_k-y_k|) + \theta(|y_k -y|)\big).
     \end{aligned}
   \end{equation}
   And passing to the limit as~$k \to \infty$ and using the continuity of~$\theta$, we arrive at
   \begin{equation}
     \label{eq3}
     |\xi(x) - \xi(y)| \le M \theta(|x-y|)
   \end{equation}
   for all~$x,y \in E$ almost surely. This shows that realizations of~$(\xi(x), x \in E)$ admit~$\theta$ almost surely.

   The final step is to establish the pointwise convergence in~\eqref{th_conv}. Let~$\{x_k\}\subset A$ be a sequence such that~$x_k \to x$ as~$k \to \infty$. The formulas~\eqref{mn_def} and~\eqref{eq3}, along with the triangle inequality, lead us to
   \begin{equation}
     \begin{aligned}
       |\xi_n(x) - \xi(x)| &\le |\xi_n(x) - \xi_n(x_k)| + |\xi_n(x_k) - \xi(x_k)| + |\xi(x_k) - \xi(x)|\\
       &\le M_n \theta(|x-x_k|) + |\xi_n(x_k) - \xi(x_k)| + M \theta(|x_k-x|),
     \end{aligned}
   \end{equation}
   which holds for all~$x \in E$ almost surely. Passing to the limit, first as~$n \to \infty$ and then as~$k \to \infty$, and using the continuity of~$\theta$ yield~\eqref{th_conv} pointwise almost surely as desired. Due to the uniqueness of the limit it is also clear that~$(\xi(x), x \in E)$ is a modification of the field~$(\tilde{\xi}(x), x \in E)$. This completes the proof.
 \end{proof}

 \begin{remark*}
   Note that since~$E$ is bounded, the theorem also holds if one uses
   \begin{equation}
     \tilde{M}_n \overset{\mathrm{def}}{=} |\xi_n(0)| + \sup\limits_{x \ne y} \frac{|\xi_n(x)-\xi_n(y)|}{\theta(|x-y|)}.
   \end{equation}
   instead of~$M_n$. Indeed, there exists a (non-random) constant~$C>0$ such that
   \begin{equation}
     C M_n \le \tilde{M}_n\le M_n,
   \end{equation}
   and all estimates in the theorem carry over to the case of~$\tilde{M}_n$.
 \end{remark*}

 A natural question arise whether one can guarantee a stronger mode of convergence in~\eqref{th_conv} and what assumptions are needed for this. We show below that provided further smoothness of the fields, indeed one can expect much more than just pointwise convergence. To alleviate unnecessary geometric complications we state the further result for one-dimensional domains only, namely~$E = [a,b]$.

 Denote the norm in the space of smooth functions~$C^{(m)}(E)$ by
 \begin{equation}
   \|f\|_m = \sum\limits_{l=0}^m \sup\limits_{x}\left|f^{(l)}(x)\right|,
 \end{equation}
 where~$f^{(l)}$ is the~$l$-th derivative of~$f$ and~$f^{(0)} \overset{\mathrm{def}}{=} f$. We have the following theorem.
 
\begin{theorem} 
  Let~$\{(\xi_n(x), x \in E)\}_{n \ge 0}$ be a sequence of stochastic processes such that their realizations are~$C^{m+1}(E)$-smooth almost surely and such that realizations of their~$(m+1)$-th derivatives admit a modulus of continuity~$\theta$ almost surely. Set
  \begin{equation}
    M_n \overset{\mathrm{def}}{=} \|\xi_n\|_{m+1} + \sup\limits_{x \ne y} \frac{\left|\xi_n^{(m+1)}(x)-\xi_n^{(m+1)}(y)\right|}{\theta(|x-y|)},
  \end{equation}
  and assume that~$(\xi_n(x),\{\mathcal{F}_n\}_{n\ge 0})$ is a martingale for every fixed~$x \in E$. If
  \begin{equation}
    \label{col_cond1}
    \sup\limits_n \mexp{M_n} < +\infty,
  \end{equation}
  then there exist a random field~$(\xi(x), x \in E)$ with almost sure~$C^{m+1}(E)$-smooth realizations and such that realizations of its~$(m+1)$-th derivative admit the modulus of continuity~$\theta$ almost surely; moreover
  \begin{equation}
    \|\xi_n-\xi\|_m \underset{n \to \infty}{\longrightarrow} 0
  \end{equation}
   almost surely.
 \end{theorem}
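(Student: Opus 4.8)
The plan is to apply Theorem~\ref{th1} to the top‑order derivative $\xi_n^{(m+1)}$ and then rebuild the limiting field by integrating that limit back up, keeping everything under control by the a.s.\ finite random bound $\sup_n M_n$. First I would show, by induction on $l=0,1,\dots,m+1$, that $(\xi_n^{(l)}(x),\{\mathcal{F}_n\}_{n\ge 0})$ is a martingale for every fixed $x\in E$; the case $l=0$ is assumed. For the inductive step, fix $x$ and $h\ne 0$ (one‑sided at the endpoints of $E$): by linearity of conditional expectation and the inductive hypothesis the difference quotient $(\xi_n^{(l-1)}(x+h)-\xi_n^{(l-1)}(x))/h$ is a martingale in $n$, and by the mean value theorem its modulus is at most $\sup_z|\xi_n^{(l)}(z)|\le\|\xi_n\|_{m+1}\le M_n$, a bound not depending on $h$ and integrable by~\eqref{col_cond1}. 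Letting $h\to 0$ and invoking dominated convergence for conditional expectations yields $\mexp{\xi_{n+1}^{(l)}(x)\mid\mathcal{F}_n}=\xi_n^{(l)}(x)$; the required integrability and $\mathcal{F}_n$‑measurability of $\xi_n^{(l)}(x)$ follow since it is an a.s.\ limit of $\mathcal{F}_n$‑measurable difference quotients dominated by $M_n$, the filtration being complete.

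\emph{The limit of the top derivative.} Applying the previous step with $l=m+1$, the processes $\xi_n^{(m+1)}$ form a martingale for each fixed $x$, their realizations admit $\theta$ a.s.\ by hypothesis, and $\sup_x|\xi_n^{(m+1)}(x)|+\sup_{x\ne y}|\xi_n^{(m+1)}(x)-\xi_n^{(m+1)}(y)|/\theta(|x-y|)\le M_n$, so~\eqref{col_cond1} provides exactly the hypothesis of Theorem~\ref{th1} for this sequence. Theorem~\ref{th1} then produces a random field $\eta$ whose realizations admit $\theta$ (hence are continuous) a.s.\ and such that $\xi_n^{(m+1)}(x)\to\eta(x)$ for \emph{all} $x\in E$ a.s. In addition, for each $l=0,\dots,m$ the martingale $\xi_n^{(l)}(a)$ is $L^1$‑bounded, so by Doob's theorem $\xi_n^{(l)}(a)\to c_l$ a.s.\ for some random variable $c_l$, and $M_n\to M$ a.s., so $K:=\sup_n M_n<\infty$ a.s.

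\emph{Reconstruction of the limit and the mode of convergence.} I would then define $\xi$ by integrating from the top: $\xi^{(m)}(x):=c_m+\int_a^x\eta(s)\,ds$, recursively $\xi^{(l-1)}(x):=c_{l-1}+\int_a^x\xi^{(l)}(s)\,ds$ for $l=m,\dots,1$, and $\xi:=\xi^{(0)}$; this is a random field, built from the random field $\eta$ and the random variables $c_l$ by integration in $x$. Since $\eta$ is continuous a.s., the fundamental theorem of calculus, iterated, shows $\xi$ has $C^{m+1}(E)$ realizations a.s.\ with $(m+1)$‑th derivative $\eta$, which admits $\theta$ a.s. For the convergence, subtract the identity $\xi_n^{(l)}(x)=\xi_n^{(l)}(a)+\int_a^x\xi_n^{(l+1)}(s)\,ds$ from the analogous one for $\xi$ (with $\xi^{(m+1)}:=\eta$ and $\xi^{(l)}(a)=c_l$), giving $\sup_x|\xi_n^{(l)}-\xi^{(l)}|\le|\xi_n^{(l)}(a)-c_l|+\int_a^b|\xi_n^{(l+1)}(s)-\xi^{(l+1)}(s)|\,ds$ for $l=0,\dots,m$. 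For $l=m$ the integral is $\int_a^b|\xi_n^{(m+1)}-\eta|$, which tends to $0$ a.s.\ by dominated convergence — the integrand is dominated by $K+\sup_x|\eta(x)|<\infty$ a.s.\ and tends to $0$ pointwise by the previous step — so $\sup_x|\xi_n^{(m)}-\xi^{(m)}|\to 0$ a.s. Descending induction on $l$, bounding the integral by $(b-a)\sup_x|\xi_n^{(l+1)}-\xi^{(l+1)}|$, then gives $\sup_x|\xi_n^{(l)}-\xi^{(l)}|\to 0$ a.s.\ for every $l\le m$, that is, $\|\xi_n-\xi\|_m\to 0$ a.s.; in particular $\xi$ is a modification of the pointwise limit of $\xi_n$.

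\emph{Main obstacle.} Everything past the first step is a routine combination of Theorem~\ref{th1}, the fundamental theorem of calculus, and dominated convergence; the one genuinely delicate point is the first step, the interchange of differentiation with conditional expectation. It goes through precisely because of the uniform‑in‑$h$, integrable domination of the difference quotients by $M_n$, which is why the boundedness hypothesis~\eqref{col_cond1} is placed on the whole $C^{m+1}$‑norm and not merely on the top derivative. (A direct equicontinuity argument — the functions $\xi_n^{(m+1)}-\eta$ are equi‑$\theta$‑continuous with a uniform constant and tend to $0$ pointwise on the compact $E$ — shows in fact $\sup_x|\xi_n^{(m+1)}-\eta|\to 0$ a.s., sharpening the conclusion to $\|\xi_n-\xi\|_{m+1}\to 0$ a.s.)
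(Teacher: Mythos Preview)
Your argument is correct and rests on the same three ingredients as the paper's: (i) the derivatives $\xi_n^{(l)}$ are martingales by dominated convergence for conditional expectations, (ii) Theorem~\ref{th1} applied to the top derivative yields a continuous limit admitting~$\theta$, and (iii) the limiting field is rebuilt by integrating back, with the $C^m$-convergence obtained via dominated convergence at the top level and then cascaded down. The only difference is organizational: the paper runs an induction on~$m$, applying the inductive hypothesis to the sequence $\xi_n^{(1)}$ (so that Theorem~\ref{th1} and the integration step each appear once per level of the recursion), whereas you unroll this into a single application of Theorem~\ref{th1} to $\xi_n^{(m+1)}$, collect all the endpoint limits $c_l$ at once via Doob, and then run a descending induction on the derivative level~$l$ for the convergence. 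Your version is marginally more explicit about the domination constant $K=\sup_n M_n$ and about the martingale property of the derivatives (which the paper merely asserts), and your closing equicontinuity remark that in fact $\|\xi_n-\xi\|_{m+1}\to 0$ a.s.\ is a genuine sharpening not stated in the paper; otherwise the two proofs are interchangeable.
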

 \begin{proof}
 First, note that the dominated convergence theorem and~\eqref{col_cond1} imply that~$(\xi_n^{(l)}(x),\{\mathcal{F}_n\}_{n\ge 0})$ is a martingale for every~$x \in E$ and for~$l=0,1,\ldots, m+1$. Also, it is clear that~$(M_n,\{\mathcal{F}_n\}_{n\ge 0})$ is a submartingale which is bounded due to~\eqref{col_cond1}, thus
   \begin{equation}
     M_n \underset{n \to \infty}{\longrightarrow} M
   \end{equation}
   for some random variable~$M$ with expectation~$\mexp{M}<+\infty$.

   We proceed further by induction. Consider the base case~$m=0$ and note that the assumptions of Theorem~\ref{th1} are satisfied for~$(\xi_n^{(1)}(x), x \in E)$. Thus, there is a stochastic process~$(\xi^{(1)}(x) , x \in E)$ such that its realizations admit~$\theta$ almost surely, in particular they are almost sure continuous, and the convergence takes place
   \begin{equation}
     \label{eq_col1}
     (\xi_n^{(1)}(x) , x \in E) \underset{n \to \infty}{\longrightarrow} (\xi^{(1)}(x), x \in E)
   \end{equation}
   pointwise almost surely. Note that~$\xi^{(1)}$ does not mean the derivative of~$\xi$ because the latter field has not yet been defined; we use this notation for convenience. However, further on we indeed construct a process~$(\xi(x), x \in E)$ in such a way that its derivative is~$(\xi^{(1)}(x), x \in E)$.

   Now, since~$(\xi_n(a),\{\mathcal{F}_n\}_{n\ge 0})$ is a bounded martingale, by Doob's convergence theorem we can find a random variable~$\xi(a)$, $\mexp{\xi(a)}< +\infty$, such that
   \begin{equation}
     \label{eq_col1a}
     \xi_n(a) \underset{n \to \infty}{\longrightarrow} \xi(a)
   \end{equation}
   almost surely.

   Now, let us define~$(\xi(x), x \in E)$ by
   \begin{equation}
     \label{eq_col2}
     \xi(x) \overset{\mathrm{def}}{=} \xi(a) + \int \limits_{a}^{x} \xi^{(1)}(s)\, ds.
   \end{equation}

   Clearly, ~$(\xi(x), x \in E)$ is~$C^{1}(E)$-smooth almost surely and realizations of its first derivative admit~$\theta$ almost surely. It is left to prove the convergence.

   We have
   \begin{equation}
     \begin{aligned}
       \|\xi_n - \xi \|_0 &= \sup \limits_x \left|\xi_n(a)-\xi(a)+\int \limits_a^x (\xi_n^{(1)}(s) - \xi^{(1)}(s)) \, ds\right|\\
       &\le |\xi_n(a)-\xi(a)| +\int \limits_E \left|\xi_n^{(1)}(s) - \xi^{(1)}(s)\right| \, ds.
     \end{aligned}
   \end{equation}

   Being H\"{o}lder continuous, $\xi^{(1)}$ is bounded almost surely; also~$\xi_n^{(1)}$ is bounded uniformly in~$n$ almost surely since
 \begin{equation}
   \|\xi_n^{(1)}\|_0 \le M_n
 \end{equation}
 and~$M_n$ is an almost sure convergent sequence. Then the dominated convergence along with~\eqref{eq_col1} and~\eqref{eq_col1a} yield
 \begin{equation}
   \|\xi_n - \xi \|_0  \underset{n \to \infty}{\longrightarrow} 0
 \end{equation}
 almost surely. This completes the proof of the base case.

 Now let~$m\ge1$ and suppose that the claim holds for~$(m-1)$. Thus, we have for~$(\xi_n^{(1)}(x), x \in E)$ that
 \begin{equation}
   \label{col_conv_mm1}
   \|\xi_n^{(1)} - \xi^{(1)}\|_{m-1} \underset{n \to \infty}{\longrightarrow} 0
 \end{equation}
 almost surely for some process~$(\xi^{(1)}(x) , x \in E)$ with almost sure~$C^{(m)}(E)$-smooth realizations and such that realizations of the~$m$-th derivative admit~$\theta$ almost surely.

 Using the same definition for~$(\xi(x), x \in E)$ as in~\eqref{eq_col2} where~$\xi(a)$ is as in~\eqref{eq_col1a}, we see that realizations of this process are~$C^{(m+1)}(E)$-smooth almost surely and the~$(m+1)$-th derivative admits~$\theta$. To prove the convergence we notice that
 \begin{equation}
   \label{col_conv_m}
   \begin{aligned}
     \|\xi_n - \xi \|_m &= \|\xi_n - \xi \|_0 + \|\xi_n^{(1)} - \xi^{(1)} \|_{m-1} \\
     &\le |\xi_n(a)-\xi(a)| + \sup \limits_x \left|\int \limits_a^x (\xi_n^{(1)}(s) - \xi^{(1)}(s)) \, ds\right| + \|\xi_n^{(1)} - \xi^{(1)} \|_{m-1}\\
     &\le |\xi_n(a)-\xi(a)| + \int \limits_E |\xi_n^{(1)}(s) - \xi^{(1)}(s)| \, ds + \|\xi_n^{(1)} - \xi^{(1)} \|_{m-1}.
   \end{aligned}
 \end{equation}
 Then, the formulas~\eqref{eq_col1a} and \eqref{col_conv_mm1} yield
 \begin{equation}
   \|\xi_n - \xi \|_m \underset{n \to \infty}{\longrightarrow} 0,
 \end{equation}
 the integral term disappearing due to the almost surely uniform convergence of~$\xi_n^{(1)}$ to~$\xi^{(1)}$ by the inductive hypothesis. This concludes the proof.
\end{proof}
\bigbreak

\noindent \textbf{Acknowledgments.} The author is supported by the RFBR grants 14-01-90406, 14-01-00237 and the SFB 701 at Bielefeld University.


\begin{thebibliography}{99}
\bibitem{Chung}{Chung, K. L.: A Course in Probability Theory. Harcourt, Brace and World: New York (1968).} 
\end{thebibliography}
\end{document}